\documentclass[12pt,twoside]{amsart}
\usepackage{amsmath}
\usepackage{amsthm}
\usepackage{amsfonts}
\usepackage{amssymb}
\usepackage{latexsym}
\usepackage{mathrsfs}
\usepackage{amsmath}
\usepackage{amsthm}
\usepackage{amsfonts}
\usepackage{amssymb}
\usepackage{latexsym}
\usepackage{geometry}
\usepackage{dsfont}
\usepackage[dvips]{graphicx}
\usepackage{color}
\usepackage[all]{xy}

\date{}
\pagestyle{plain}
\textheight= 22 true cm \textwidth =15 true cm
\allowdisplaybreaks[4] \footskip=15pt
\renewcommand{\uppercasenonmath}[1]{}

\topmargin=27pt \evensidemargin0pt \oddsidemargin0pt
\usepackage{graphicx,amssymb}
\usepackage[all]{xy}
\usepackage{amsmath}

\allowdisplaybreaks
\usepackage{amsthm}
\usepackage{color}

\theoremstyle{plain}
\newtheorem{theorem}{Theorem}[section]

\newtheorem{corollary}[theorem]{Corollary}

\newtheorem*{open question}{Open Question}

\theoremstyle{definition}
\newtheorem*{acknowledgement}{Acknowledgement}

\theoremstyle{remark}

\def\p{\frak p}


\begin{document}
\begin{center}
{\large  \bf On Cohen's theorem for  Artinian modules}

\vspace{0.5cm}   Xiaolei Zhang$^{a}$,\  Hwankoo Kim$^{b}$,\ Wei Qi$^{c}$

{\footnotesize a.\ School of Mathematics and Statistics, Shandong University of Technology, Zibo 255049, China\\
b.\ Division of Computer and Information Engineering, Hoseo University, Asan 31499, Republic of Korea\\
c.\  School of Mathematical Sciences, Sichuan Normal University, Chengdu 610068,  China\\

E-mail: hkkim@hoseo.edu\\}
\end{center}

\bigskip
\centerline { \bf  Abstract}
\bigskip
\leftskip10truemm \rightskip10truemm \noindent

In this paper, we  prove that a finitely embedded  $R$-module $M$ is Artinian if and only if for every prime ideal $\p$ of $R$ with $(0:_RM)\subseteq \p$, there exists a submodule $N^\p$ of $M$  such that $M/N^\p$ is finitely  embedded and $M[\p]\subseteq N^\p\subseteq (0:_M\p)$.
\vbox to 0.3cm{}\\
{\it Key Words:} Cohen's Theorem; Artinian  modules;  finitely embedded  modules.\\
{\it 2010 Mathematics Subject Classification:}  13E10, 16P20.

\leftskip0truemm \rightskip0truemm
\bigskip

\section{Introduction}
Throughout this article, all rings are commutative rings with identity and modules are unitary.

It is well-known that Cohen's Theorem states that a  ring $R$ is a Noetherian ring  if and only if every prime ideal of $R$ is finitely generated (see \cite[Theorem 2]{c50}). In 1994, Smith  extended  Cohen's Theorem from rings to  modules, which states that a finitely generated $R$-module $M$ is Noetherian if and only if the submodules $\p M$ of $M$ are finitely generated for every prime ideal $\p$ of $R$, if and only if $M(\p)$ is finitely generated  for each prime ideal $\p$ of $R$ with $(0:_RM)\subseteq\p$, where $M(\p)=\{x\in M\mid sx\in \p M $ for some $s\in R-\p \}$ (see \cite{s94}).  Very recently, Parkash and  Kour \cite[Theorem 2.1]{pk21} generalized the Smith's result on Noetherian modules and obtained that  a finitely generated $R$-module $M$ is Noetherian if and only if for every prime ideal $\p$ of $R$ with $(0:_RM)\subseteq \p$, there exists a finitely generated  submodule $N_\p$ of $M$ such that $\p M\subseteq N_\p\subseteq M(\p)$.

The main motivation of this paper is to  dualize  Parkash and  Kour's results to Artinian modules. We recall some basic notions on finitely embedded modules and Artinian modules (refer to \cite{sv72} for example). Let  $R$ be a ring and $M$ an $R$-module. $M$ is said to be finitely embedded if there exists finitely many simple modules $S_1,S_2,\cdots,S_n$ such that $E(M)$ is isomorphic to $E(S_1)\oplus E(S_2)\oplus\cdots \oplus E(S_n)$, where $E(M)$ and $E(S_k)$ are the injective envelopes of $M$ and $S_k$ respectively. The class of finitely embedded modules is closed under submodules and extensions by \cite[Proposition 3.20]{sv72}. A family $\{M_i\}_{i\in \Lambda}$ of submodules of $M$ is said to be an inverse system if for any finite number of $i_1,i_2,\cdots,i_k$ of $\Lambda$, there is an element $i\in \Lambda$ such that $M_i\subseteq \bigcap\limits_{k=1}^nM_{i_k}$. By \cite[Proposition 3.19]{sv72}, $M$ is finitely embedded if and only if every inverse system of nonzero submodules of $M$ is bounded below by a nonzero submodule of $M$.  $M$ is said to be  Artinian if it satisfies the minimal condition for submodules, or equivalently, the descending chain condition for submodules. It is well known a Noetherian module is exactly a module in which all submodules are finitely generated. Dually,  $M$ is  Artinian if and only if every factor module of $M$ is finitely embedded (see  \cite[Theorem 3.21]{sv72}). In 2006, Nishitani studied Cohen's Theorem for Artinian modules and showed that a finitely embedded module $M$ is Artinian if and only if $M/(0:_M\p)$ is finitely  embedded  for every prime ideal $\p$ of $R$. We have generalized the Nishitani's result in Theorem \ref{main}, which can also be seen as a dualization of Parkash and  Kour's results.

\section{Results}

Let $R$ be a ring, $\p$ be a prime ideal of $R$ and $M$ an $R$-module. Define $M[\p]=\bigcap\limits_{s\in R-\p}s(0:_M\p)$. Then $M[\p]$ is obviously a submodule of $M$.

\begin{theorem}\label{main} Let $R$ be a ring and $M$  a finitely embedded  $R$-module. Then $M$ is Artinian if and only if for every prime ideal $\p$ of $R$ with $(0:_RM)\subseteq \p$, there exists a submodule $N^\p$ of $M$  such that $M/N^\p$ is finitely  embedded and $M[\p]\subseteq N^\p\subseteq (0:_M\p)$.
\end{theorem}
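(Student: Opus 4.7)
The forward direction is immediate: if $M$ is Artinian then every factor of $M$ is finitely embedded by \cite[Theorem 3.21]{sv72}, and the choice $N^\p := (0:_M\p)$ (which contains $M[\p]$ trivially) satisfies the condition.

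For the converse I plan to dualize the strategy of Parkash and Kour \cite{pk21}. Assume toward contradiction that the hypothesis holds but $M$ is not Artinian; by \cite[Theorem 3.21]{sv72} the family $\Sigma := \{K \le M : M/K \text{ is not finitely embedded}\}$ is then nonempty. I would first produce a minimal element $K_0 \in \Sigma$ via Zorn's lemma: given a chain $\{K_\lambda\} \subseteq \Sigma$, the intersection $K := \bigcap_\lambda K_\lambda$ lies in $\Sigma$, for otherwise $M/K$ is finitely embedded and the chain $\{K_\lambda/K\}$ forms an inverse system of submodules of $M/K$ with $\bigcap_\lambda K_\lambda/K = 0$; \cite[Proposition 3.19]{sv72} then forces some $K_\lambda = K$, giving $K = K_\lambda \in \Sigma$ regardless. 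By the minimality of $K_0$, every proper submodule $L \subsetneq K_0$ has $M/L$ finitely embedded, whence $K_0/L \leq M/L$ is finitely embedded; thus every quotient of $K_0$ is finitely embedded, which by \cite[Theorem 3.21]{sv72} means $K_0$ is Artinian.

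I would then associate a prime $\p$ to $K_0$ by setting $\p := \Ann_R(K_0)$ and establishing the key \emph{divisibility lemma} that $sK_0 = K_0$ for every $s \in R \setminus \p$. Granting this lemma, primality of $\p$ is immediate (if $ab \in \p$ and $a \notin \p$ then $0 = abK_0 = b(aK_0) = bK_0$, so $b \in \p$), and moreover $K_0 \subseteq M[\p]$, because $K_0 \subseteq (0:_M\p)$ trivially and $K_0 = sK_0 \subseteq s(0:_M\p)$ for every $s \notin \p$. Since $\p \supseteq (0:_R M)$, the hypothesis now supplies $N^\p$ with $K_0 \subseteq M[\p] \subseteq N^\p \subseteq (0:_M\p)$ and $M/N^\p$ finitely embedded. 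The short exact sequence
\[
0 \to N^\p/K_0 \to M/K_0 \to M/N^\p \to 0
\]
then exhibits $M/K_0$ as an extension with finitely embedded right-hand term. A further argument exploiting $N^\p \subseteq (0:_M\p)$ together with the Artinian structure of $K_0$ establishes that the left-hand term $N^\p/K_0$ is also finitely embedded, whence $M/K_0$ is finitely embedded by closure of the class of finitely embedded modules under extensions, contradicting $K_0 \in \Sigma$.

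The main obstacle is the divisibility lemma $sK_0 = K_0$ for $s \notin \Ann_R(K_0)$, which is the dual of the primality of $(N:M)$ for a maximal non-finitely-generated submodule $N$ in Parkash--Kour's argument. Its proof would proceed by supposing that $sK_0 \subsetneq K_0$ for some $s \notin \p$, so that $M/sK_0$ is finitely embedded by the minimality of $K_0$, and then exploiting Fitting's decomposition of the endomorphism induced by $s$ on the Artinian module $K_0$ together with the non-finite-embeddedness of $M/K_0$ to extract a contradiction. A secondary technicality is verifying finite embeddability of $N^\p/K_0$ in the final step, which requires care because a quotient of a finitely embedded module by a finitely embedded submodule need not be finitely embedded in general; one must make essential use of the Artinianity of $K_0$ and of the containment $N^\p \subseteq (0:_M\p)$ to push this through.
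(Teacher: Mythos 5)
Your forward direction and your Zorn's-lemma construction of a minimal element $K_0$ of $\Sigma$ match the paper, and your observation that $K_0$ is automatically Artinian (every proper $L\subsetneq K_0$ has $M/L$, hence $K_0/L$, finitely embedded) is correct, though the paper does not need it. The first step you defer, the divisibility lemma $sK_0=K_0$ for $s\notin\p$, is a genuine but fillable gap: Fitting's decomposition is the wrong tool (for an Artinian module it only yields surjectivity of $s$ on the stable image $s^nK_0$, not on $K_0$ itself), whereas the paper proves the lemma directly. Since $s\notin\Ann_R(K_0)$ one has $(0:_{K_0}s)\subsetneq K_0$, so $M/(0:_{K_0}s)$ and hence its submodule $(0:_Ms)/(0:_{K_0}s)$ are finitely embedded by minimality; the exact sequence
$$0\rightarrow (0:_Ms)/(0:_{K_0}s)\rightarrow M/K_0\rightarrow sM/sK_0\rightarrow 0$$
and closure of finitely embedded modules under extensions force $sM/sK_0$, hence $M/sK_0$, to be non-finitely-embedded, and minimality gives $sK_0=K_0$. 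You should supply this (or an equivalent) argument; as written the primality of $\p$ and the inclusion $K_0\subseteq M[\p]$ both rest on an unproved claim.

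The second deferred step is the real obstruction, and I do not believe it can be closed from the ingredients you allow yourself. You must show $N^\p/K_0$ is finitely embedded, and you assert that the Artinianity of $K_0$ together with $N^\p\subseteq(0:_M\p)$ will do it. But $N^\p/K_0$ lies \emph{above} $K_0$, not inside it, so the Artinianity of $K_0$ gives no control over it, and nothing in your setup bounds the subquotient $(0:_M\p)/K_0$ that contains it. The paper closes exactly this hole with an external input you omit, namely Nishitani's Lemma 7 of \cite{N06}: if a finitely embedded $M$ is not Artinian, there is an ideal $I$ with $(0:_MI)$ Artinian and $M/(0:_MI)$ not finitely embedded. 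The minimal element $N$ is then chosen \emph{inside} $(0:_MI)$; since $I\subseteq\Ann_R(N)=\p$ one gets $(0:_M\p)\subseteq(0:_MI)$, so $(0:_M\p)/N$ is Artinian, hence finitely embedded, and therefore so is its submodule $N^\p/N$, which yields the contradiction. Your globally minimal $K_0$ produces an Artinian $K_0$ but no Artinian module containing $(0:_M\p)$, so you need either to import Nishitani's lemma and rerun your minimality argument inside $(0:_MI)$, or to prove an analogue of it; without that, the final contradiction does not go through.
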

\begin{proof}Suppose $M$ is an Artinian $R$-module and $\p$ is  a prime ideal with $(0:_RM)\subseteq \p$.  If we  take $N^\p=(0:_M\p)$, then $N^\p$ is certainly a submodule of $M$  such that $M/N^\p$ is finitely  embedded and $M[\p]\subseteq N^\p\subseteq (0:_M\p)$ by \cite[Theorem 3.21]{sv72}.

Conversely, suppose that $M$ is not Artinian. Then there exists  a submodule $N'$ of $M$ such that $M/N'$ is not finitely embedded by \cite[Theorem 3.21]{sv72}. Consider the set $\Gamma:=\{N\leq N'\mid M/N$ is not finitely embedded$\}$. Then $\Gamma$ is non-empty as $N'\in \Gamma$. Make a partial order on $\Gamma$ by the opposite of inclusion, that is, $N_1\geq N_2$ if and only if $N_1\subseteq N_2$ in $\Gamma$.

{\bf Claim 1: There exists a maximal element $N\in \Gamma$.} Let $\{N_i\mid i\in \Lambda\}$ be a total ordered subset of $\Gamma$. Set $N=\bigcap\limits_{i\in \Lambda}N_i$. Then $M/N$ is not finitely embedded. Indeed, since $\{N_j/N\}_{j\in\Lambda}$ is an  inverse system of submodules of $M/N$ and there is no nonzero submodule of $M/N$ which is contained in each $N_j/N$. By \cite[Proposition 3.19]{sv72}, there are two possibilities: either $N_j/N=0$ for some $j\in \Lambda$, or $M/N$ is not finitely embedded. In the former case, $N=N_j$ and thus $M/N$ is is not finitely embedded in both cases. Consequently, by Zorn's Lemma, $\Gamma$ has a maximal element, which is also denoted by $N$.  Set $\p=(0:_RN)$.

{\bf Claim 2: $\p$ is a prime ideal.} Indeed, let $a\not\in \p, b\not\in \p$ be elements in $R$. Then $(0:_Na)\subsetneq N$.  Thus $M/(0:_Na)$ is finitely embedded, and so is $(0:_Ma)/(0:_Na)$.  Consider the exact sequence $0\rightarrow (0:_Ma)/(0:_Na)\rightarrow M/N\rightarrow aM/aN\rightarrow 0$. We have $aM/aN$ is not finitely embedded. Thus $M/aN$ is  not finitely embedded. So $aN=N$ by the maximality of $N$. Similarly, $bN=N$. Hence $abN=N\not=0$ as $M$ is  finitely embedded. So $ab\not\in\p$.

{\bf Claim 3: $N\subseteq M[\p]$.} Indeed, suppose there is  $y\in N$ such that $y\not\in  M[\p]$. Then $y\not\in s(0:_M\p)$ for some $s\in R-\p$. Since $N\subseteq (0:_M\p)$, we have $sN\subsetneq N$.  Hence $M/sN$ is  finitely embedded. Since $s\not\in\p$, we have $(0:_Ns)\subsetneq N$.  So $M/ (0:_Ns)$ is finitely  embedded. Consider the exact sequence $$0\rightarrow (0:_Ms)/ (0:_Ns)\rightarrow M/N\rightarrow sM/sN\rightarrow 0.$$
Since $M/sN$ is  finitely embedded, the submodule $sM/sN$ is also finitely embedded. Since $M/(0:_Ns)$ is finitely embedded, the submodule $(0:_Ms)/ (0:_Ns)$ is also finitely embedded. Hence $M/N$ is finitely embedded, which is a contradiction.

Now, we will show  $M$ is Artinian. Suppose the finitely embedded  $R$-module  $M$ is not Artinian, then there is an ideal $I$ of $R$ such that $(0:_MI)$ is Artinian and $M/(0:_MI)$ is not finitely embedded by  \cite[Lemma 7]{N06}. Furthermore, there is a submodule  $N$ of $(0:_MI)$ such that $M/N$ is not finitely embedded and $\p=(0:_RN)$ is prime by Claim 1 and Claim 2. Since $N\subseteq (0:_MI)$, we have  $(0:_M\p)\subseteq (0:_MI)$. Thus the quotient $(0:_M\p)/N$ is Artinian, and thus is finitely embedded. Since  $(0:_RM)\subseteq \p$, there is a submodule $N^\p$ of $M$  such that $M/N^\p$ is finitely  embedded and $N\subseteq M[\p]\subseteq N^\p\subseteq (0:_M\p)$ by assumption and Claim 3. And then the submodule $N^{\p}/N$ of $(0:_M\p)/N$ is finitely embedded. Consider the following exact sequence $$0\rightarrow N^{\p}/N\rightarrow M/N\rightarrow M/N^{\p}\rightarrow 0.$$
Since $M/N^{\p}$ and $N^{\p}/N$ are finitely  embedded, $ M/N$ is also  finitely  embedded, which is a contradiction. Hence $M$ is Artinian.
\end{proof}

\begin{corollary} Let $R$ be a ring. A finitely embedded  $R$-module $M$ is Artinian if and only if $M/(0:_M\p)$ is finitely  embedded  for every prime ideal $\p$ of $R$ with $(0:_RM)\subseteq \p$.
\end{corollary}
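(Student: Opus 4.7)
The forward direction is immediate: if $M$ is Artinian, take $N^\p=(0:_M\p)$; by \cite[Theorem 3.21]{sv72} every factor of an Artinian module is finitely embedded, and the containment $M[\p]\subseteq(0:_M\p)$ is built into the definition of $M[\p]$.

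For the converse I would argue by contradiction, treating it as the Artinian dual of the Parkash--Kour argument for Noetherian modules. Assume $M$ is not Artinian, so by \cite[Theorem 3.21]{sv72} some quotient $M/N'$ fails to be finitely embedded. Form
$$\Gamma=\{N\le N'\mid M/N\text{ is not finitely embedded}\},$$
order $\Gamma$ by reverse inclusion, and extract a maximal element $N$ by Zorn's lemma. The only subtle bookkeeping here is that the intersection over a chain in $\Gamma$ still lies in $\Gamma$ (or equals some member of the chain), which I would verify using the inverse system characterization \cite[Proposition 3.19]{sv72}. Set $\p=(0:_RN)$.

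The two key verifications are that $\p$ is prime and that $N\subseteq M[\p]$. For primality, take $a,b\notin\p$; then $aN,bN\ne 0$, and one applies the short exact sequence
$$0\to (0:_Ma)/(0:_Na)\to M/N\to aM/aN\to 0,$$
noting that maximality forces $M/(0:_Na)$ to be finitely embedded, while $M/N$ is not, so $aM/aN$ and hence $M/aN$ cannot be finitely embedded; maximality again gives $aN=N$, and symmetrically $bN=N$, whence $abN=N\ne 0$ and $ab\notin\p$. For $N\subseteq M[\p]$, I would argue by contradiction: a $y\in N$ avoiding $s(0:_M\p)$ for some $s\notin\p$ would give $sN\subsetneq N$ (since $\p N=0$ places $N$ inside $(0:_M\p)$), so by maximality both $M/sN$ and $M/(0:_Ns)$ are finitely embedded, and the analogous exact sequence with $s$ in place of $a$ would then contradict $N\in\Gamma$.

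At this stage I need a way to keep $N$ inside an Artinian submodule of $M$, so I would invoke Nishitani's \cite[Lemma 7]{N06} at the outset to choose $N'=(0:_MI)$ Artinian for some ideal $I$. Then $N\subseteq(0:_MI)$ forces $I\subseteq\p$, so $(0:_M\p)\subseteq(0:_MI)$ is Artinian, and in particular $(0:_M\p)/N$ is finitely embedded. Using the hypothesis to pick $N^\p$ with $M[\p]\subseteq N^\p\subseteq(0:_M\p)$ and $M/N^\p$ finitely embedded, the submodule $N^\p/N\subseteq(0:_M\p)/N$ is also finitely embedded, and the exact sequence
$$0\to N^\p/N\to M/N\to M/N^\p\to 0$$
makes $M/N$ finitely embedded, contradicting $N\in\Gamma$. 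The step I expect to be most delicate is the verification of $N\subseteq M[\p]$, which has to exploit the maximality of $N$ through precisely the right exact sequence while keeping track of which submodules lie in $\Gamma$.
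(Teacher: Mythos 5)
Your proposal is correct, but it proves the corollary by reproducing the entire proof of Theorem \ref{main} (maximal counterexample via Zorn's lemma, primality of $(0:_RN)$, the containment $N\subseteq M[\p]$, and Nishitani's Lemma 7), whereas the paper obtains the corollary in one step from Theorem \ref{main} by taking $N^\p=(0:_M\p)$, which satisfies $M[\p]\subseteq N^\p\subseteq(0:_M\p)$ automatically since $1\in R-\p$. The underlying mathematics is identical; you have simply inlined the theorem's proof where a citation suffices.
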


\begin{corollary} Let $R$ be a ring. A finitely embedded  $R$-module $M$ is Artinian if and only if $M/M[\p]$ is finitely  embedded  for every prime ideal $\p$ of $R$ with $(0:_RM)\subseteq \p$.
\end{corollary}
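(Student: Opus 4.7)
The forward direction is immediate: if $M$ is Artinian then every quotient of $M$ is Artinian, hence finitely embedded by \cite[Theorem 3.21]{sv72}, so the choice $N^\p := (0:_M\p)$ always works.

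For the converse I will argue by contradiction and dualize the Parkash--Kour strategy. Assume $M$ is finitely embedded but not Artinian. By \cite[Theorem 3.21]{sv72} some quotient $M/N'$ fails to be finitely embedded, so the set $\Gamma := \{\,N\le N' \mid M/N \text{ is not finitely embedded}\,\}$ is non-empty. Order $\Gamma$ by reverse inclusion. To apply Zorn's Lemma I must check that every chain $\{N_i\}$ has an upper bound; the natural candidate is $N:=\bigcap_i N_i$, and to see $N\in\Gamma$ I use the inverse-system characterization of finitely embedded modules from \cite[Proposition 3.19]{sv72}: $\{N_i/N\}$ is an inverse system in $M/N$ with zero intersection, so either some $N_j/N=0$ (forcing $N=N_j\in\Gamma$) or $M/N$ is not finitely embedded. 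This yields a (Zorn-)maximal $N\in\Gamma$, i.e.\ one minimal with respect to inclusion among elements of $\Gamma$.

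The heart of the argument is then to extract enough structure from this minimal $N$. I set $\p:=(0:_R N)$ and will show $\p$ is prime and $N\subseteq M[\p]$. For primality, if $a,b\notin\p$ then $aN$ and $(0:_N a)$ are both strictly contained in $N$, so by minimality $M/aN$ and $M/(0:_N a)$ are finitely embedded; the exact sequence
$$0\to (0:_M a)/(0:_N a)\to M/N\to aM/aN\to 0$$
together with closure of finitely embedded modules under submodules and extensions forces $aM/aN$ not to be finitely embedded, hence $M/aN$ would not be either unless $aN=N$; doing the same with $b$ gives $abN=N\neq 0$ (using that $M$ is finitely embedded), so $ab\notin\p$. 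The inclusion $N\subseteq M[\p]$ is proved analogously: a $y\in N\setminus M[\p]$ produces $s\in R-\p$ with $sN\subsetneq N$ and $(0:_N s)\subsetneq N$, and the exact sequence $0\to (0:_M s)/(0:_N s)\to M/N\to sM/sN\to 0$ gives $M/N$ finitely embedded, a contradiction.

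To reach the final contradiction I invoke Nishitani's lemma \cite[Lemma 7]{N06}: if the finitely embedded module $M$ is not Artinian, there is an ideal $I$ with $(0:_M I)$ Artinian and $M/(0:_M I)$ not finitely embedded. Applying the construction above with $N':=(0:_M I)$ produces $N\subseteq (0:_M I)$ with $\p=(0:_R N)$ prime (so $I\subseteq\p$) and $N\subseteq M[\p]$. From $I\subseteq\p$ we get $(0:_M\p)\subseteq (0:_M I)$, so $(0:_M\p)/N$ is a submodule of the Artinian module $(0:_M I)/N$ and is therefore finitely embedded. The hypothesis now furnishes $N^\p$ with $N\subseteq M[\p]\subseteq N^\p\subseteq (0:_M\p)$ and $M/N^\p$ finitely embedded; then $N^\p/N$ is finitely embedded, and the exact sequence $0\to N^\p/N\to M/N\to M/N^\p\to 0$ forces $M/N$ finitely embedded, contradicting $N\in\Gamma$. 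The step I expect to be most delicate is the Zorn argument via inverse systems (ensuring the intersection of a chain stays in $\Gamma$) and the bookkeeping in the primality/$M[\p]$ arguments, where one must repeatedly invoke the minimality of $N$ on the right pair of strictly smaller submodules to feed the extension-closure of finitely embedded modules.
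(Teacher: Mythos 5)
Your proof is correct and takes essentially the same route as the paper: you have reproduced, step by step, the paper's proof of Theorem \ref{main} (Zorn's Lemma via the inverse-system criterion, primality of $(0:_RN)$, $N\subseteq M[\p]$, and Nishitani's Lemma 7), whereas the paper obtains this corollary immediately from that theorem by taking $N^\p=M[\p]$, which sits inside $(0:_M\p)$ because $s(0:_M\p)\subseteq(0:_M\p)$ for every $s$. The only cosmetic slip is in the forward direction, where you should simply note that $M/M[\p]$ is a quotient of an Artinian module, hence Artinian and finitely embedded, rather than exhibiting the auxiliary submodule $N^\p=(0:_M\p)$ from the theorem's statement.
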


\begin{acknowledgement}\quad\\
The first author  was supported by the National Natural Science Foundation of China (No. 12061001).
\end{acknowledgement}

\end{document}